\documentclass{amsart}
\usepackage{graphicx}
\usepackage{amsfonts}
\newtheorem{tm}{Theorem}

\newtheorem{defi}{Definition}

\newtheorem{rem}{Remark}
\newtheorem{rems}{Remarks}
\newtheorem{lm}{Lemma}

\newtheorem{prop}{Proposition}
\newtheorem{nota}{Notation}
\newtheorem{quest}{Question}

\title{Where not to find the spectrum of the partial theta function}
\author{Yousra Gati}
\address{Universit\'e de Carthage, EPT-LIM, Tunisie}
\email{yousra.gati@gmail.com}
\author{Vladimir Petrov Kostov} 
\address{Universit\'e C\^ote d'Azur, 
LJAD, Nice, France}  
\email{vladimir.kostov@unice.fr} 
\date{}
\begin{document} 

\begin{abstract}
  The spectrum of
  Ramanujan's partial theta function
  $\theta (q,x):=\sum _{j=0}^{\infty}q^{j(j+1)/2}x^j$, $q\in \mathbb{D}_1$
  (the unit disk centered at the origin),
  $x\in \mathbb{C}$, is the set of values of the parameter $q$ for which
  $\theta (q,.)$ has a multiple zero.
  We show that there is no spectral value in the set
  $\mathbb{S}\cup \mathbb{D}_{c_0}$, $c_0=0.2078750206\ldots$,
  where $\mathbb{S}$ is the sector
  $\{ 0<|z|<0.6,{\rm arg}(z)\in [\pi /4 ,7\pi /4 ]\}$. There is a single
  spectral value in the set $\mathbb{S}\cup \mathbb{D}_{0.31}$ which equals
  $0.309249\ldots$. For  $q\in \mathbb{S}\cup \mathbb{D}_{c_0}$, the moduli of the zeros of $\theta$ are separated by the negative half-integer powers of $|q|$.\\

{\bf Key words:} partial theta function, Jacobi theta function,
Jacobi triple product\\

{\bf AMS classification:} 26A06
\end{abstract}

\maketitle

\section{Introduction}

We consider S.~Ramanujan's {\em partial theta function}
$\sum_{j=0}^{\infty}q^{j(j+1)/2}x^j$ (see \cite{AnBe} and \cite{Wa}) regarding
$x\in \mathbb{C}$ as a variable and $q\in \mathbb{D}_1$ as a parameter;
$\mathbb{D}_r\subset \mathbb{C}$ stands for the open disk centered
at the origin and of radius $r$. The function's name is chosen because of its
resemblance with the {\em Jacobi theta function}
$\Theta (q,x)=\sum_{j=-\infty}^{\infty}q^{j^2}x^j$, as
$\theta (q^2,x/q)=\sum_{j=0}^{\infty}q^{j^2}x^j$; ``partial'' refers to the
summation being performed only for non-negative indices. We remind that

$$\Theta (q,x^2)=\prod_{m=1}^{\infty}(1-q^{2m})(1+x^2q^{2m-1})(1+x^{-2}q^{2m-1})~~~\,
{\rm (Jacobi~triple~product)}$$
which implies the equalities

\begin{equation}\label{equTheta}
  \Theta^*(q,x):=\Theta (\sqrt{q},\sqrt{q}x)=\sum_{j=-\infty}^{\infty}q^{j(j+1)/2}x^j
=\prod_{m=1}^{\infty}(1-q^m)(1+xq^m)(1+q^{m-1}/x)~.\end{equation}
Thus one can write

\begin{equation}\label{equThetaG}
  \theta (q,x)=\Theta^*(q,x)-G(q,x)~,~~~\,
  G(q,x):=\sum_{j=-\infty}^{-1}q^{j(j+1)/2}x^j=\sum_{j=1}^{\infty}q^{j(j-1)/2}x^{-j}~.
\end{equation}
The partial theta function is of interest in many domains: Ramanujan type
$q$-series (\cite{Wa}), the theory of (mock) modular forms (\cite{BrFoRh}),
statistical physics and combinatorics (\cite{So}), asymptotic analysis
(\cite{BeKi}), asymptotics
and modularity of partial and false theta functions and their
interaction with representation theory and conformal field theory
(\cite{BFM}, \cite{CMW}), relationship between
Appell-Lerch sums and mock theta functions (\cite{EM}),
Artin-Tits monoids (\cite{FGM}), 
quantum many-body systems (\cite{WPG}). Andrews-Warnaar identities for
the partial theta function can be found in \cite{WM}, \cite{Wei}
and~\cite{Sun}. The explicit combinatorial interpretation of the
coefficients of the leading root of $\theta$ as a series in~$q$
is given in~\cite{Pr}.  Pad\'e
approximants of $\theta$ are studied in~\cite{LuSa}.

For each $q$ fixed, the function $\theta$ is an entire function of order $0$, see \cite[formula~(5)]{KoDBAN1}. It has countably-many zeros (denoted by $\xi_j$) which are distinct for $|q|<c_0$, see Notation~\ref{notac0} and part (3) of Remarks~\ref{remstmmain}.

To continue the list of applications of $\theta$ we define its {\em spectrum} as the set
of values of the parameter $q$ for which $\theta (q,.)$ has a multiple zero; its elements are called {\em spectral numbers} or {\em spectral values}.
Recently, the role played by $\theta$ in the theory of
section-hyperbolic polynomials (i.~e. real univariate polynomials with all
roots real negative and with all their truncations having only real
negative roots) was revealed in \cite{Ost}, \cite{KaLoVi} and \cite{KoSh},
based on the classical works of Hardy, Petrovitch and Hutchinson
(see~\cite{Ha}, \cite{Pe} and~\cite{Hu}). The relationship with the
spectrum of $\theta$ was mentioned in~\cite{KoSh}. This stimulated the
study of the analytic properties of $\theta$, see
\cite{KoFAA}--\cite{KoArxiv}.

\begin{rem}\label{remsp}
  {\rm The positive spectral numbers}
    $$0<\tilde{q}_1=0.309249\ldots <
    \tilde{q}_2=0.516959\ldots <\tilde{q}_3=0.630628\ldots <\cdots$$
    {\rm form an increasing sequence tending to
    $1^-$, see the list of the 6-digit truncations of the first $25$ of them
    in~\cite{KoSh}. The spectral number $\tilde{q}_1$ is the closest to $0$.
    The negative spectral numbers
    $\cdots <\bar{q}_2<\bar{q}_1=-0.72713332\ldots <0$
    form a decreasing sequence tending to $-1^+$,
    see~\cite[Table~1]{KoPRSE2}. The
    existence of the complex spectral numbers}

  $$v_{\pm}:=0.4353184958\ldots \pm i\cdot 0.1230440086\ldots$$
  {\rm is justified in~\cite[Proposition~8]{KoAM}.}
  \end{rem}

\begin{nota}\label{notac0}
  {\rm For $q\in \mathbb{D}_1$, we denote by
    $\mathbb{A}_{a,b}(|q|)\subset \mathbb{C}$
    the open annulus $\{ |q|^{-a}<|x|<|q|^{-b}\}$, $0<a<b$, and by
    $\mathbb{S}_{r;\alpha ,\beta}\subset \mathbb{C}$ the sector
    $\{ 0<|x|<r,{\rm arg}(x)\in [\alpha ,\beta ]\}$,
    $0\leq \alpha <\beta \leq 2\pi$. We denote by $c_0:=0.2078750206\ldots$
    the solution to the equation $2\sum_{\nu =1}^{\infty}|q|^{\nu^2/2}=1$. We set
    $\theta_k:=\sum_{j=0}^kq^{j(j+1)/2}x^j$ (the $k$th {\em truncation} of $\theta$)
    and $\theta_k^{\bullet}:=
    \sum_{j=k+1}^{\infty}q^{j(j+1)/2}x^j$.}
\end{nota}

\begin{defi}
  {\rm We say that the zeros $\xi_j$ of the partial theta function  are {\em strongly
    separated
    in modulus} if}
  \begin{equation}\label{equsepar}
    \xi_1\in (\mathbb{D}_{|q|^{-3/2}}\setminus \{ 0\} )~~~\, {\rm and}~~~\,
    \xi_k\in \mathbb{A}_{k-1/2,k+1/2}(|q|)~,~~~\, k\geq 2~.
    \end{equation}
\end{defi}

It is clear that if for $q=q_*$, the zeros of $\theta$ are strongly separated in
modulus, then $q_*$ is not a spectral number. We prove
in Section~\ref{secprtmmain} the following theorem:

\begin{tm}\label{tmmain}
  (1) For $q\in \mathbb{S}_{0.6,\pi /4,7\pi /4}\cup \mathbb{D}_{c_0}$, the zeros of
  $\theta$ are strongly separated in modulus.

  (2) The only spectral number of
  $\theta$ in the set $\mathbb{S}_{0.6,\pi /4,7\pi /4}\cup \mathbb{D}_{0.31}$
  is $\tilde{q}_1$.
  \end{tm}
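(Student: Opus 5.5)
The approach is a Rouché-type comparison on circles $|x|=|q|^{-k-1/2}$, $k\geq 1$, where a single term of the series dominates the sum of all the others. On the circle $|x|=|q|^{-k-1/2}$, write $\theta(q,x)=\sum_j q^{j(j+1)/2}x^j$ and compare with the dominant term $q^{k(k+1)/2}x^k$. For $j=k+\nu$ we get
$$\bigl|q^{j(j+1)/2}x^j\bigr|\big/\bigl|q^{k(k+1)/2}x^k\bigr|=|q|^{\nu(2k+1+\nu)/2-\nu(k+1/2)}=|q|^{\nu^2/2}~,$$
which holds for every integer $\nu\geq -k$. Hence the sum of all the other terms is bounded by $2\sum_{\nu\geq 1}|q|^{\nu^2/2}$ times the dominant term, and this is $<1$ exactly when $|q|<c_0$ (Notation~\ref{notac0}).

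By Rouché's theorem, for $|q|<c_0$ the function $\theta$ then has exactly $k$ zeros in $\mathbb{D}_{|q|^{-k-1/2}}$ for every $k\geq 1$. Taking differences of consecutive radii gives exactly one zero in each annulus $\mathbb{A}_{k-1/2,k+1/2}(|q|)$ for $k\geq 2$, and one zero in $\mathbb{D}_{|q|^{-3/2}}$. Since $\theta(q,0)=1\neq 0$, this zero is nonzero, which proves the $\mathbb{D}_{c_0}$ part of (1).

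For the sector $\mathbb{S}_{0.6,\pi/4,7\pi/4}$ the crude modulus estimate fails, so phases must be used. I would use the decomposition (\ref{equThetaG}), $\theta=\Theta^*-G$, on the same circles. By the Jacobi triple product, $\Theta^*(q,\cdot)$ has its zeros exactly at $x=-q^{-m}$, $m\geq 1$, and these lie strictly inside the corresponding annuli. So it suffices to show $|G|<|\Theta^*|$ on $|x|=|q|^{-k-1/2}$ for $k\geq 1$; this places the zero counts of $\theta$ and $\Theta^*$ in agreement.

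The estimate $|G|<|\Theta^*|$ splits into two parts. First, on that circle, $|G|\leq\sum_{j\geq 1}|q|^{j(j-1)/2+j(k+1/2)}$ is very small for $k\geq 2$. Second, $|\Theta^*|$ must be bounded below using the product: factor out the dominant piece $\prod_{m\leq k}(xq^m)$ and bound each remaining factor $|1+xq^m|$, $|1+q^{m-1}/x|$ and $|1-q^m|$ from below. The factor $|1+xq^m|$ with $m$ nearest $k$ has modulus $|1+|q|^{\pm1/2}e^{i\phi}|$; here the argument condition $\arg q\in[\pi/4,7\pi/4]$ matters only through the term-by-term bounds, while the worst case is controlled by $|q|<0.6$. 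If the $G$-versus-$\Theta^*$ route is too lossy for $k=1$, I would instead compare $\theta$ directly with its dominant terms using $\theta_k$ and $\theta_k^\bullet$, and treat small $k$ ($k=1,2$) separately by explicit estimates of finitely many terms plus a tail bound.

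The main obstacle is this uniform lower bound on the sector. It will likely require splitting $|q|\in[c_0,0.6)$ and the argument range into finitely many pieces, combined with a monotonicity or maximum-principle argument in $q$. One useful tool: the functions involved are analytic in $q$, so the inequality needs checking only on the boundary of each region, which reduces the work to one-parameter numerical verifications.

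Part (2) then follows. Strong separation excludes spectral values in $\mathbb{S}_{0.6,\pi/4,7\pi/4}\cup\mathbb{D}_{c_0}$. On the remaining region, the set $\{c_0\leq|q|<0.31,\ |\arg q|<\pi/4\}$, I would count spectral values using the argument principle applied to the discriminant-like function $q\mapsto$ (resultant of $\theta$ and $\partial_x\theta$ restricted to the two leading zeros). Equivalently, I would show that the first two zeros $\xi_1,\xi_2$ coalesce only once there, while all zeros $\xi_k$ with $k\geq 3$ stay separated because the $k\geq 2$ circle estimates survive for $|q|<0.31$. By Remark~\ref{remsp}, that single coalescence point is $\tilde{q}_1$, and since $\tilde{q}_1$ is a simple spectral point, the count is exactly one.
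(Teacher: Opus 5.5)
\textbf{Verdict: genuine gaps.} Your argument for $\mathbb{D}_{c_0}$ is correct; it is the Rouch\'e argument behind \cite[Lemma~1]{KoAM}, which the paper cites. For the sector, however, you leave open exactly the step that makes up the paper's proof. That step is the uniform lower bound on the circles $|x|=|q|^{-3/2}$ and $|x|=|q|^{-5/2}$, which you call the main obstacle, and the tools you suggest for it do not work as stated.

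First, $|G|<|\Theta^*|$ fails for $k=1$. At $q=\frac12e^{i\pi/4}$, $x=2^{3/2}e^{3\pi i/4}$ one finds $\theta\approx0.54+0.39i$ and $G\approx-0.29-0.20i$. Hence $|\Theta^*|=|\theta+G|\approx0.31<|G|\approx0.36$, so your fallback is mandatory.

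Second, put $y=xq^k$. Then $|\Theta^*(q,x)|=|q|^{-k^2/2}|\Theta^*(q,y)|$ with $|y|=|q|^{-1/2}$. Bounding the factors of $\Theta^*(q,y)$ one at a time gives at most a bound of order $10^{-3}$ near $|q|=0.6$. That beats $\sum_{j\ge1}|q|^{j(j+2k)/2}\ge|G|$ only from about $k=4$ on, so it is useless on the circles that matter. Contrary to your remark, $\arg q$ plays no role when the factors $1+yq^m$, $1+q^{m-1}/y$ are bounded individually, since $\arg y$ is free. It enters only jointly: the two critical factors $1+1/y$ and $1+qy$ can both have modulus near $1-|q|^{1/2}$ only if $1/y$ and $qy$ are both near $-|q|^{1/2}$. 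That forces $q=(1/y)(qy)$ to lie near the positive half-axis.

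Third, analyticity in $q$ does not reduce a lower bound on $|\theta|$ to a boundary check, because the minimum modulus principle needs the zero-freeness you are trying to prove. It does work for the ratio $G/\Theta^*$ with $x=q^{-k-1/2}u$, $|u|=1$, since $\Theta^*(q,\cdot)$ vanishes only at the points $-q^n$, $n\in\mathbb{Z}$. But that is the comparison which fails at $k=1$, and even then the boundary check has two parameters (boundary point and $u$), not one.

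The paper supplies the missing estimate in three steps. It minimizes $|\theta_3|$ and $|\theta_5|$ numerically over $(\arg x,\arg q)$ on a grid of radii. It controls the change in $|q|$ explicitly: for fixed arguments and $|x|=|q|^{-k-1/2}$ the terms of $\theta$ change only in modulus, which gives the bounds via $\Xi,\Psi,K,L$. It then passes from $|x|=|q|^{-5/2}$ to all outer circles via $\Theta^*(q,x/q)=x\Theta^*(q,x)$ (Lemma~\ref{lmTheta*}).

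Also, the zero counts in $\mathbb{D}_{|q|^{-3/2}}$ and $\mathbb{A}_{3/2,5/2}(|q|)$ must come from continuity in $q$ starting in $\mathbb{D}_{c_0}$, not from Rouch\'e against $\Theta^*$. The reason is that $\Theta^*(q,\cdot)$ and $G(q,\cdot)$ have an essential singularity at $0$, and $\Theta^*$ vanishes at all $-q^n$, $n\in\mathbb{Z}$, not only at $-q^{-m}$, $m\ge1$.

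In part~(2) your reduction to the wedge $\{c_0\le|q|<0.31,\ |\arg q|<\pi/4\}$ is right. Your claim about $\xi_k$, $k\ge3$, can be justified there: for $|q|\le0.31$ even the factorwise bound gives $|\Theta^*|\ge0.7>0.055\ge|G|$ on $|x|=|q|^{-5/2}$.

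The heart of the matter, however, is only announced: that $\xi_1$ and $\xi_2$ coalesce just once in the wedge. You carry out no winding-number computation for $(\xi_1-\xi_2)^2$. Remark~\ref{remsp} only says that $\tilde q_1$ is the spectral value closest to $0$, which says nothing about other spectral values with $\tilde q_1\le|q|<0.31$. The appeal to simplicity of $\tilde q_1$ is also backwards: a winding number $1$ would give uniqueness by itself, while simplicity without such a count gives nothing.

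The paper closes this step by citing \cite[Theorem~1]{KoDBAN2}, which says $\tilde q_1$ is the only spectral number in $\mathbb{D}_{0.31}$. With that, (2) follows at once from (1).
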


\begin{rems}\label{remstmmain}
  {\rm (1) Theorem~\ref{tmmain} improves the main result in \cite{KoVJM}
    which says that for $q\in \mathbb{S}_{0.55,\pi /2,3\pi /2}$, the zeros of
    $\theta$ are strongly separated in modulus and that for
    $q\in \mathbb{S}_{0.6,\pi /2,3\pi /2}$ and $k\neq 2$, $3$, conditions
    (\ref{equsepar}) hold true. 
  
  (2) The existence of the following spectral numbers of modulus $<0.6$ explains why
    the sector $\mathbb{S}_{0.6,\pi /4,7\pi /4}$ cannot be replaced by the disk
    $\mathbb{D}_{0.6}$: $\tilde{q}_1$, $\tilde{q}_2$ and
    $v_{\pm}$
    (see Remark~\ref{remsp}). The radius $0.6$ cannot be extended
    much due to the presence of the spectral number $\bar{q}_1$, see
    Remark~\ref{remsp}.
    
    (3) For $q\in \overline{\mathbb{D}_{c_0}}$, the zeros of $\theta$ are strongly separated in modulus, see \cite[Lemma~1]{KoAM}.}
  \end{rems}

{\bf Acknowledgement.} The authors thank Boris Shapiro from the University
of Stockholm for the figure included in this paper. 

\section{Comments}

\subsection{The spectral values of $\theta$ in $\mathbb{D}_{0.6}$}

A natural question to ask is which spectral values of $\theta$
except $\tilde{q}_1$, 
$\tilde{q}_2$ and $v_{\pm}$ belong to the disk
$\mathbb{D}_{0.6}$, because $0.6$ is the radius of the sector
$\mathbb{S}_{0.6,\pi /2,3\pi /2}$, see Theorem~\ref{tmmain}.
To this end one can consider the truncations
$\theta_k$ of $\theta$ and the {\em spectral
values (numbers)} of $\theta_k$, i.~e. the values of $q$ for which $\theta_k$ has a multiple zero. 
The truncations
$\theta_8$, $\theta_9$ and $\theta_{10}$ have multiple zeros for
(but not only for) the following values of~$q$:

$$\begin{array}{cc}0.5374009225\ldots \pm i\cdot 0.1800191987\ldots ~,&
  0.5373319887\ldots \pm i\cdot 0.1803221641\ldots \\ \\ 
  {\rm and}&0.5373387872\ldots \pm i\cdot 0.1803273624\ldots ~.\end{array}$$
We list here the truncations up to $4$ digits of their $5$
respective zeros of least modulus:

$$\begin{array}{lll}-2.0471-i\cdot 1.4799~,&-4.7152+i\cdot 1.7057~,&
  -2.4631+i\cdot 7.6623~,\\ \\
  -2.4639+i\cdot 7.6621~,&1.6159+i\cdot 20.5308~,&\\ \\
  -2.0467-i\cdot 1.4773~,&-4.7087+i\cdot 1.7093~,&-2.4678+i\cdot 7.6608~,\\ \\
  -2.4684+i\cdot 7.6612~,&0.8219+i\cdot 17.0772~,&\\ \\
  -2.0467-i\cdot 1.4772~,&-4.7085+i\cdot 1.7091~,&-2.4680+i\cdot 7.6613~,\\ \\
  -2.4685+i\cdot 7.6611~,&0.7472+i\cdot 17.0808~.\end{array}$$

\begin{defi}\label{defiinterms}
  {\rm We say ``the $j$th and $(j+1)$st zero of $\theta$ (or its truncation
      $\theta_r$) in terms of
      the modulus'' in the sense that there are exactly $j-1$ zeros counted with multiplicity  
    whose moduli are smaller than the moduli of these two, and the moduli
    of all other zeros are larger than their moduli.}
  \end{defi}
The third and fourth (in terms of the modulus) zeros of $\theta_8$, $\theta_9$
and $\theta_{10}$ are not exactly equal,
because of the inevitable approximations in the computation.
One can suggest that there is a pair of complex conjugate spectral values
$$w_{\pm}^1=0.53\ldots \pm i\cdot 0.18\ldots$$
of $\theta$ for which
$\theta (w_{\pm}^1,.)$ has
double zeros $-2.46\ldots \pm i\cdot 7.66\ldots$ and simple zeros
$-2.04\ldots \mp i\cdot 1.47\ldots$ and $-4.7\ldots \pm i\cdot 1.70\ldots$.

In the same way one can consider the truncations $\theta_{\ell}$, $\ell =10$, $11$ -- there are 
values of $q$

$$w_{\pm}^2=0.584\ldots \pm i\cdot 0.062\ldots$$
for which 
$\theta_{\ell}(w_{\pm}^2,.)$ has two zeros
$-10.64\ldots +i\cdot 6.17\ldots$ very close to one another; these are the
fourth and fifth zero in terms of their moduli,
the first three zeros being simple. One can suggest that $\theta$
has spectral values close to $w_{\pm}^2$ with double zeros of the form
$-10.64\ldots +i\cdot 6.17\ldots$.

The truncation $\theta_{10}$ (resp. $\theta_{11}$) has also spectral values
$0.618\ldots \pm i\cdot 0.204\ldots$ (resp.
$0.617\ldots \pm i\cdot 0.204\ldots$) for which $\theta_{10}$ (resp.
$\theta_{11}$) has two zeros of the form $-0.07\ldots +i\cdot 7.34\ldots$
(resp. $-0.07\ldots +i\cdot 7.33\ldots$), the fourth and fifth in terms
of their moduli. It seems likely that $\theta$
has spectral values

$$w_{\pm}^3=0.61\ldots \pm i\cdot 0.20\ldots \not\in \overline{\mathbb{D}_{0.6}}~.$$
One can
suppose that the double zeros of $\theta(w_{\pm}^j,.)$, $j=2$, $3$, are the
fourth and fifth in terms of their moduli. 

\begin{rem}
{\rm The existence of the supposed spectral values $w_{\pm}^j$, $j=1$, $2$, $3$, is not proved, so they are in fact would-be spectral values. In what follows we omit ``would-be''.}
\end{rem}

The spectral values of the truncations $\theta_{\ell}$, $\ell \leq 11$,
suggest that $\theta$ has no spectral values other than $\tilde{q}_1$,
$\tilde{q}_2$, $v_{\pm}$,  $w_{\pm}^1$ and $w_{\pm}^2$ in
the disk $\mathbb{D}_{0.6}$.


\subsection{Spectral values of the truncations of~$\theta$}

In Fig.~\ref{TruncThetaFig} we present the unit circle, 
the spectral values of the
truncation $\theta_{15}$ and the circle of radius $0.309$, the latter  roughly
corresponds to the spectral value~$\tilde{q}_1$. 

\begin{figure}[htbp]
\centerline{\hbox{\includegraphics[scale=0.7]{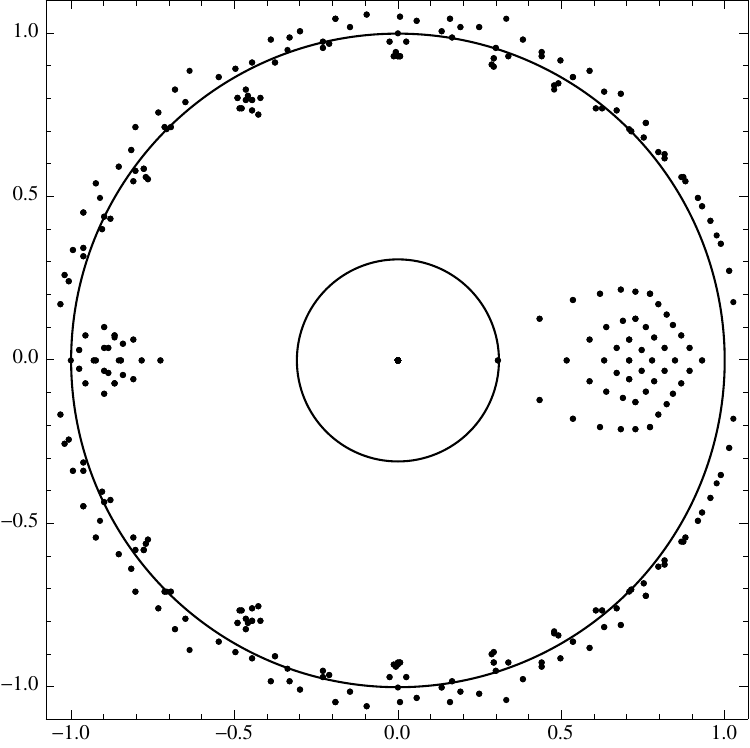}}}
\caption{The spectral points of the
truncation $\theta_{15}$.}
\label{TruncThetaFig}
\end{figure}
The origin is also marked, because it is a
zero of Res$(\theta_{15}(q,x),(\partial \theta_{15}/\partial x)(q,x),x)$; it is not a spectral value of $\theta_{15}$.
Certain spectral values of $\theta_{15}$ lie beyond the unit circle; this is
not the case of the spectral values of $\theta$. One can notice the
accumulation of spectral values around the roots of unity of order $1$,
$\ldots$,~$6$. This is the reason to ask the following

\begin{quest}
  Is it true that every point of the unit circle is an accumulation point
  for the spectrum of~$\theta$?
  \end{quest}
The most notable of these accumulations is the one around $1$. It suggests
that one could expect $\theta$ to have spectral values which are
situated in the right
half-plane as explained by the partially filled matrix below and the comments that follow.

$$\begin{array}{ccccccccc}
  ~&~&~&~&~&~&~&~&(5,6) \\ &&&&&(4,5)&&& \\ &&&(3,4)&&&&(5,6)& \\ 
  &(2,3)&&&(4,5)&&&& \\ 
  (1,2)&&(3,4)&&&&(5,6)&& \\ &(2,3)&&&(4,5)&&&&\\ &&&(3,4)&&&&(5,6)&\\
   &&&&&(4,5)&&&\\ &&&&&&&&(5,6)\end{array}$$
The pairs $(j,j+1)$ mean that when $q$ equals the corresponding spectral value, 
the $j$th and $(j+1)$st zeros of $\theta$ (in terms of the modulus) coalesce;
see Definition~\ref{defiinterms}. The pair $(1,2)$ indicates the position of
the spectral number $\tilde{q}_1$. To its right and in the same horizontal line
are the spectral numbers $\tilde{q}_2$ and $\tilde{q}_3$ (the pairs $(3,4)$
and $(5,6)$ in the middle). The pairs $(2,3)$ indicate the relative
positions of the spectral numbers $v_{\pm}$.

One can suggest that the spectral values with the same pair $(j,j+1)$ are
situated on an arc turned with its concavity towards the point $1$;
this is the case of the three spectral values
$w_{+}^1$, $\tilde{q}_2$, $w_{-}^1$ (pairs $(3,4)$)
and of the four spectral values $w_+^3$, $w_+^2$, $w_-^2$, $w_-^3$
(pairs $(4,5)$),
see the previous subsection. One can also suggest 
that there are couples of arcs (symmetric w.r.t. the horizontal axis)
tending to the point $1$ which consist of
spectral values with pairs $(k,k+1)$, $(k+1,k+2)$, $(k+2,k+3)$, $\ldots$,
$k=1$, $2$, $\ldots$. In Fig.~\ref{TruncThetaFig} this is not true for the
spectral values closest to the point $1$ which is the effect of considering
a truncation of $\theta$ instead of $\theta$ itself.

\section{Proof of Theorem~\protect\ref{tmmain}\protect\label{secprtmmain}}

  $1^{\circ}$. The spectral number $\tilde{q}_1$ is the only one in the disk
  $\mathbb{D}_{0.31}$ (\cite[Theorem~1]{KoDBAN2}) from which and from part (1) 
  of the theorem its part (2) follows. We prove in $2^{\circ}-9^{\circ}$
  part (1) for
  $q\in \mathbb{S}_{0.6,\pi /4 ,\pi /2}=:\mathbb{S}_1$ which implies part (1) for
  $q\in \mathbb{S}_{0.6,3\pi /2 ,7\pi /4}$ due to
  $\theta (\bar{q},\bar{x})=\overline{\theta (q,x)}$. We prove part (1)
  for $q\in \mathbb{S}_{0.6,\pi /2 ,\pi }$ in $10^{\circ}$ from which
  it follows for $q\in \mathbb{S}_{0.6,\pi ,3\pi /2}$. 
\vspace{1mm}

  $2^{\circ}$. We show in $3^{\circ}-7^{\circ}$
  that for $q\in \mathbb{S}_1$ and $|x|=|q|^{-3/2}$,
  one has $|\theta (q,x)|\neq 0$. For $q$ close to $0$ and $k\in \mathbb{N}^*$,
  the equivalences
  $\xi_k\sim -q^{-k}$ hold true (\cite[Theorem~4]{KoBSM1}). The continuous dependence of the zeros $\xi_j$ on the parameter $q$ implies that 
  for $q\in \mathbb{S}_1$ and $|x|=|q|^{-3/2}$, it is true that
  $0<|\xi_1|<|q|^{-3/2}<|\xi_2|$. In $8^{\circ}$ (resp. in $9^{\circ}$)
  we prove that the inequalities
  $0<|\xi_1|<|q|^{-3/2}<|\xi_2|<|q|^{-5/2}$ (resp. all conditions  (\ref{equsepar}))
  hold true for $q\in \mathbb{S}_1$.
\vspace{1mm}

  $3^{\circ}$. Suppose first that $|q|=0.5$. We set
  $q=q(b):=0.5\cdot e^{bi}$, $b\in [\pi /4,\pi /2]$,
  $x=x(a):=0.5^{-3/2}\cdot e^{ai}$, $a\in [0,2\pi]$. The quantity
  $|\theta_3(q(b),x(a))|^2$ is a degree $\leq 18$ trigonometric polynomial
  in the two variables $a$ and $b$ (i.~e. a polynomial of total degree $\leq 18$ in the four quantities $\cos a$, $\sin a$, $\cos b$ and $\sin b$). 
  
  \begin{rem}\label{remcomput}
{\rm To find the minimum of this function of
  two variables  numerically, we use the L-BFGS-B method provided by  the
  SciPy optimization library of Python. This is a Quasi-Newton method type
  with boundary constraints. To garantee the stability, we use 20 random starts.
  The precision is equal to $10^{-5}$. The code execution is fast (about two
  seconds). }\end{rem}
  
  Taking square root we find
  that the minimal value of $|\theta_3(q(b),x(a))|$ is
  $0.45784\ldots >0.45$ (obtained for $a=2.38345$
  and $b=0.78540$; $b$ and $a+b$ are close to $\pi /4$ and $\pi$
  respectively). 
  We set
  
  $$\begin{array}{ccll}\Xi (s,t)&:=&|s^{-1/2}-t^{-1/2}|+|s^{3/2}-t^{3/2}|&
    {\rm and}\\ \\ 
  \Psi (s)&:=&\sum_{j=4}^{\infty}s^{j(j+1)/2-3j/2}=\sum_{j=4}^{\infty}s^{j(j-2)/2}~,&
  s,~t>0~.\end{array}$$ 
For
  $q\in \mathbb{S}_1$ and $|x|=|q|^{-3/2}$, 

$$|\theta_3^{\bullet}(q,x)|\leq \Psi (|q|)=\Psi (0.5)=
0.06827\ldots <0.0683~,$$
  so $|\theta (q,x)|\geq |\theta_3(q,x)|-|\theta_3^{\bullet}(q,x)|>
  0.45-0.0683=0.3817$.
\vspace{1mm}

  $4^{\circ}$. Suppose that $|q|\in [0.5,0.6]$. Consider two different values
  $q_1$ and $q_2$ of $q$, with $|q_1|=0.5$, $q_j\in \mathbb{S}_1$
  and arg$(q_1)=$arg$(q_2)$.
  Consider the
  quantities $\tau_j:=\theta_3(q_j,x_j)$, where $x_j:=|q_j|^{-3/2}\omega$,
  with $|\omega |=1$, $j=1$, $2$. Hence for all $j$, the monomials
  $q_1^{j(j+1)/2}x_1^j$ and $q_2^{j(j+1)/2}x_2^j$ have the same argument.
  Both quantities $\tau_1$ and $\tau_2$ have monomials equal to $1$
  and their monomials $(q_j)^3(x_j)^2$
  are equal and of modulus $1$. One has 

  $$\begin{array}{ccl}|\tau_2-\tau_1|&\leq&||q_1|\cdot |x_1|-
  |q_2|\cdot |x_2||+||q_1|^6\cdot |x_1|^3-
  |q_2|^6\cdot |x_2|^3|\\ \\ &=&\Xi (|q_2|,|q_1|)~\leq ~
  \Xi (0.6,0.5)~=~0.2344\ldots ~<~0.2345~.
  \end{array}$$
  Set
  $\lambda_j:=\theta_3^{\bullet}(q_j,x_j)$, $j=1$, $2$. Clearly

  $$|\theta (q_2,x_2)|\geq |\theta (q_1,x_1)|-H~,~~~\,
  H:=|\theta (q_1,x_1)-\theta (q_2,x_2)|~,$$ 
  hence $H\leq |\tau_2-\tau_1|+|\lambda_1-\lambda_2|$. Observe that for $j=4$, $5$, $\ldots$, one has 
  
  $$\begin{array}{l}|q_i^{j(j+1)/2}x_i^j|=|q_i^{j(j-2)/2}|~,~ i=1,~2,~~~\, {\rm so}~~~\,  |q_2^{j(j+1)/2}x_2^j|\geq |q_1^{j(j+1)/2}x_1^j|~~~\, {\rm and} \\ \\ |q_1^{j(j+1)/2}x_1^j-q_2^{j(j+1)/2}x_2^j|=|q_2^{j(j+1)/2}x_2^j|-|q_1^{j(j+1)/2}x_1^j|~.
     \end{array}$$
 As

  $$\begin{array}{ccl}|\lambda_1-\lambda_2|&\leq&\sum_{j=4}^{\infty}
  |q_1^{j(j+1)/2}x_1^j-q_2^{j(j+1)/2}x_2^j|=\Psi (|q_2|)-\Psi (|q_1|)\\ \\ 
  &\leq&\Psi (0.6)-\Psi (0.5)~=~
  0.0853\ldots ~<~0.0854~,\end{array}$$ 
  one gets $|\theta (q_2,x_2)|\geq 0.3817-0.2345-0.0854=0.0618>0$. 
\vspace{1mm}

  $5^{\circ}$. Suppose that $|q|\in [0.4,0.5]$. We use the notation from
  $4^{\circ}$, but this time $|q_2|\in [0.4,0.5]$. Hence  

  $$\begin{array}{ccccl}|\tau_2-\tau_1|&=&\Xi (|q_2|,|q_1|)&\leq &\Xi (0.5,0.4)
  =0.2674\ldots
  <0.2675~~~\, \, 
  {\rm and}\\ \\ |\lambda_1-\lambda_2|&=&\Psi (|q_1|)-\Psi (|q_2|)&\leq &
  \Psi (0.5)-\Psi (0.4)
  =0.0416\ldots <0.0417~.\end{array}$$
  Thus $H\leq 0.2675+0.0417=0.3092$ and
  $|\theta (q_2,x_2)|\geq 0.3817-0.3092=0.0725>0$.
\vspace{1mm}

  $6^{\circ}$. Suppose that $|q|=0.36$. Similarly to $3^{\circ}$ we
  set $q=0.36\cdot e^{bi}$, $b\in [\pi /4,\pi /2]$, $x=0.36^{-3/2}e^{ai}$,
  $a\in [0,2\pi]$, and we find that the minimal value of
  $|\theta_3(q(b),x(a))|$ is $0.55011\ldots >0.55$ (obtained for $a=3.44113$
  and $b=0.78540$). Next,

  $$|\theta_3^{\bullet}(q,x)|~\leq ~\Psi (|q|)
  ~\leq ~\Psi (0.36)~
  =~0.01727\ldots ~<~0.01718$$
  and $|\theta (q,x)|\geq |\theta_3(q,x)|-|\theta_3^{\bullet}(q,x)|>
  0.55-0.01718=0.53282$.
  \vspace{1mm}
  
  $7^{\circ}$. Suppose that $|q|,|q_2|\in [0.36,0.4]$ and $|q_1|=0.36$.
  As in $4^{\circ}$ we find that
  $$\begin{array}{ccl}|\tau_2-\tau_1|&\leq &\Xi (0.4,0.36)
  =0.1225\ldots <0.1226~~~\, {\rm and}\\ \\ 
  |\lambda_1-\lambda_2|&\leq &\Psi (0.4)-\Psi (0.36)=
  0.00938\ldots <0.00939~,\end{array}$$
  so $H\leq 0.1226+0.00939=0.13199$ and
  $|\theta (q_2,x_2)|\geq 0.53282-0.13199=0.40083>0$.

  Suppose that
  $|q|,|q_2|\in [0.24,0.36]$ and $|q_1|=0.36$. Hence
  $$\begin{array}{ccl}|\tau_2-\tau_1|&\leq& \Xi (0.36,0.24)=
  0.4729\ldots <0.473~~~\, {\rm and}\\ \\ 
  |\lambda_1-\lambda_2|&\leq& \Psi (0.36)-\Psi (0.24)=
  0.01393\ldots <0.01394~.\end{array}$$
  Thus $H\leq 0.473+0.01394=0.48694$ and
  $|\theta (q_2,x_2)|\geq 0.53282-0.48694=0.04588>0$.

  For $|q|\in (0,0.24)$, $q\in \mathbb{S}_1$, $|x|=|q|^{-3/2}$,
  we use the identity
  $$\theta_3=1+qx+q^3x^2+q^6x^3=(1+q^2x)(1+(1-q)qx+q^4x^2)~.$$
  One has
  
  1) $|q^2x|=|q|^{1/2}<0.24^{1/2}<0.25^{1/2}=0.5$, so $|1+q^2x|>0.5$; 
  
  2) $|q^4x^2|=|q|< 0.24$;
  
  3) $|1-q|$ takes its minimal value for $q=0.24\cdot e^{\pi i/4}$
  (to be checked directly), this value is $0.8474\ldots >0.8474$; 
  
  4)
  $|qx|=|q|^{-1/2}>0.24^{-1/2}>0.25^{-1/2}=2$. Thus

  $$|\theta_3|> 0.5\cdot (2\cdot 0.8474-1-0.24)=0.2274>
  \sum_{j=4}^{\infty}0.24^{j(j-2)/2}=0.0033\ldots \geq |\theta_3^{\bullet}|~,$$
  so $|\theta |\geq |\theta_3|-|\theta_3^{\bullet}|>0.2274-0.0034=0.224>0$.
  One concludes that for $q\in \mathbb{S}_1$ and $|x|=|q|^{-3/2}$, 

  $$|\theta |>\min ( 0.3817,0.0618,0.0725,0.53282,
  0.40083,0.04588,0.224)=0.04588~.$$

  $8^{\circ}$. Suppose that $q\in \mathbb{S}_1$ and $|x|=|q|^{-5/2}$. For the
  values of $|q|$ (denoted by $r_{\nu}:=0.2025+0.0025\cdot \nu$, $\nu =0$, $\ldots$, $159$, $r_{159}=0.6$)
  we compute the minimal value of
  $|\theta_5(q,x)|$ (as this was done for $|\theta_3|$ for $|q|=0.5$ and
  $|q|=0.36$). The corresponding minimal values
  $\mu_{\nu}$ belong to the interval $[0.74930884\ldots ,7.60808216\ldots ]$.

  Suppose that $|q|\in [r_{\nu},r_{\nu +1}]$.
  We consider two values of $q$ (denoted
  as in $3^{\circ}-7^{\circ}$ by $q_1$ and $q_2$, with $|q_1|=r_{\nu}$ and
  arg$(q_1)=$arg$(q_2)$) and two values of $x$ ($x_1$ and $x_2$, with
  $|x_j|=|q_j|^{-5/2}$ and arg$(x_1)=$arg$(x_2)$). We set

  $$K(s,t):=2|s^{-3/2}-t^{-3/2}|+|s^{-2}-t^{-2}|+|s^{5/2}-t^{5/2}|~,~~~\,
  L(s):=
  \sum_{j=6}^{\infty}s^{j(j-4)/2}~.$$
  Recall that $\theta_5(q,x)=1+qx+q^3x^2+q^6x^3+q^{10}x^4+q^{15}x^5$.
  Hence the quantities $\theta_5(q_1,x_1)$ and $\theta_5(q_2,x_2)$ both  contain 
  the monomial $1$, and their monomials $(q_j)^{10}(x_j)^4$ are also equal and of
  modulus~1. Clearly 
$$\begin{array}{cccc}|\theta_5(q_1,x_1)-\theta_5(q_2,x_2)|&\leq& 
    K(r_{\nu},r_{\nu +1})~,&\\ \\ 
  |\theta_5^{\bullet}(q_1,x_1)-\theta_5^{\bullet}(q_2,x_2)|&\leq&
  |L(r_{\nu})-L(r_{\nu +1})|&{\rm 
  and}\end{array}$$
  
  \begin{equation}\label{equrho}\begin{array}{ccl}|\theta (q_2,x_2)|&\geq&
    |\theta (q_1,x_1)|-K(r_{\nu +1},r_{\nu})-|L(r_{\nu})-L(r_{\nu +1})|\\ \\ &\geq&
    \min (\mu_{\nu},\mu_{\nu +1})-K(r_{\nu +1},r_{\nu})-|L(r_{\nu})-L(r_{\nu +1})|=:
    \rho_{\nu}~.
  \end{array}\end{equation}
  For all values of $\nu$, one has $\rho_{\nu}>0$ which implies
  $0<|\xi_1|<|q|^{-3/2}<|\xi_2|<|q|^{-5/2}$. In fact, we prove not
  (\ref{equrho}), but the stronger inequality (\ref{equeta}).
\vspace{1mm}

$9^{\circ}$. In order to prove the conditions  (\ref{equsepar}) for $k\geq 3$ 
we introduce the quantity $M(s):=L(s)+s^{5/2}=\sum_{j=5}^{\infty}s^{j(j-4)/2}$. We
remind that the functions $\Theta^*$ and $G$ were defined in
(\ref{equTheta}) and (\ref{equThetaG}). 

  \begin{lm}\label{lmM}
    It is true that for $|x|=|q|^{-k+1/2}$, $k\geq 3$, one has
    $|G(q,x)|\leq M(|q|)$
    and $|G(q,x/q)|\leq M(|q|)$.
    \end{lm}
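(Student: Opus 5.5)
The plan is to bound $G$ termwise by the triangle inequality and then compare the resulting series with $M(|q|)=\sum_{j\geq 5}|q|^{j(j-4)/2}$ by a reindexing of the exponents. Write $s:=|q|\in(0,1)$. By (\ref{equThetaG}),
$$|G(q,x)|\leq \sum_{j=1}^{\infty}s^{j(j-1)/2}|x|^{-j}.$$
For $|x|=s^{-k+1/2}$ each term has modulus $s^{e_j}$ with
$$e_j:=\frac{j(j-1)}{2}+j\Bigl(k-\frac12\Bigr)=\frac{j^2}{2}+j(k-1).$$
In the same way, $|x/q|=s^{-k-1/2}$, so the terms of $G(q,x/q)$ have modulus $s^{f_j}$ with
$$f_j:=\frac{j(j-1)}{2}+j\Bigl(k+\frac12\Bigr)=\frac{j^2}{2}+jk.$$
Since $f_j\geq e_j$ and $s<1$, the bound for $G(q,x/q)$ follows from the one for $G(q,x)$. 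It also suffices to treat $k=3$, because $e_j$ increases with $k$.

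For $k=3$ one has $e_j=j^2/2+2j$. The exponents of $M$ are $m_i:=i(i-4)/2$, $i\geq 5$, that is $5/2,6,21/2,16,\ldots$. I will use the shift $i=j+4$:
$$m_{j+4}=\frac{(j+4)j}{2}=\frac{j^2}{2}+2j=e_j.$$
So for $k=3$ the exponents coincide exactly, $\sum_{j\geq1}s^{e_j}=M(s)$, and hence $|G(q,x)|\leq M(s)$. For $k>3$, and for $G(q,x/q)$, the inequality follows from the termwise monotonicity noted above, because each exponent is at least $m_{j+4}$.

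I do not expect a real obstacle. The only points to check are the sign bookkeeping in the exponents, namely that $|x|^{-j}=s^{j(k-1/2)}$, and the index shift. The key observation is the exact identity $e_j=m_{j+4}$; this is presumably why $M$ was defined by including the extra term $s^{5/2}$ in $L$.
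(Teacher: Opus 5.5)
Your proof is correct and follows essentially the same route as the paper's: the paper also applies the triangle inequality, uses monotonicity in $k$ to reduce to the exponent $j(j-1)/2+5j/2=j(j+4)/2$, and identifies $\sum_{j\geq 1}|q|^{j(j+4)/2}$ with $M(|q|)$ via the shift you describe. The only cosmetic difference is that for $G(q,x/q)$ the paper bounds by $\sum_{j\geq 1}|q|^{j(j+6)/2}\leq M(|q|)$ directly, whereas you compare $f_j\geq e_j$.
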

  \begin{proof}
    Indeed, 
    
    $$\begin{array}{ccccl}|G(q,x)|&\leq&
      \sum_{j=1}^{\infty}|q|^{j(j-1)/2+(2k-1)j/2}&\leq&
      \sum_{j=1}^{\infty}|q|^{j(j-1)/2+5j/2}\\ \\ &=&
    \sum_{j=1}^{\infty}|q|^{j(j+4)/2}&=&M(|q|)~~~\, {\rm and}\\ \\ 
    |G(q,x/q)|&\leq& \sum_{j=1}^{\infty}|q|^{j(j-1)/2+(2k+1)j/2}&\leq&
    \sum_{j=1}^{\infty}|q|^{j(j-1)/2+7j/2}\\ \\ &=&
    \sum_{j=1}^{\infty}|q|^{j(j+6)/2}&\leq&M(|q|)~.\end{array}$$
    \end{proof}
  
  It is checked numerically that for all $\nu \in [0..159]$, one has

  \begin{equation}\label{equeta}\eta_{\nu}:=\min (\mu_{\nu},\mu_{\nu +1})-
  K(r_{\nu +1},r_{\nu})-|L(r_{\nu})-L(r_{\nu +1})|-
  2M(r_{\nu +1})>0~.\end{equation} 
  Hence $|\theta (q_2,x_2)|>2M(r_{\nu +1})$, see (\ref{equrho}). As
  
  $$|\theta (q_2,x_2)|\geq |\Theta^*(q_2,x_2)|-|G(q_2,x_2)|~~~\, {\rm and}~~~\, 
  |G(q_2,x_2)|\leq M(r_{\nu +1})~,$$
  this implies
  $|\Theta^*(q_2,x_2)|>M(r_{\nu +1})\geq |G(q_2,x_2)|$.

\begin{lm}\label{lmTheta*}
  Set $|q|=r\in (0,0.6]$.
  Suppose that $|x|=r^{-k+1/2}$, $k\in \mathbb{N}$,
  $k\geq 3$, and 
  that $|\Theta^*(q,x)|>M(r)\geq |G(q,x)|$.
  Then $|\Theta^*(q,x/q)|>M(r)\geq |G(q,x/q)|$.
\end{lm}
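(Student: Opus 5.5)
Both halves of the conclusion rest on two tools: the quasi-periodicity of $\Theta^*$ in $x$, which comes straight from the triple product (\ref{equTheta}), and Lemma~\ref{lmM}.

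\emph{The bound on $G$.} The point $x/q$ has modulus $r^{-k-1/2}=r^{-(k+1)+1/2}$ with $k+1\geq 3$. The second inequality of Lemma~\ref{lmM} therefore gives $|G(q,x/q)|\leq M(r)$ directly. The first inequality applied with $k+1$ gives the same bound.

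\emph{The functional equation.} From the series $\Theta^*(q,x)=\sum_{j\in\mathbb{Z}}q^{j(j+1)/2}x^j$ I would check
$$\Theta^*(q,x/q)=\sum_{j}q^{j(j-1)/2}x^j=\sum_{j}q^{j(j+1)/2}x^{j+1}=x\,\Theta^*(q,x),$$
where the middle step shifts the index $j\mapsto j+1$. The same identity can be read off the product, since replacing $x$ by $x/q$ shifts the factors $(1+xq^m)$ and $(1+q^{m-1}/x)$ against each other. Consequently
$$|\Theta^*(q,x/q)|=|x|\,|\Theta^*(q,x)|>r^{-k+1/2}M(r).$$

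\emph{Finishing.} It remains to show $r^{-k+1/2}M(r)\geq M(r)$. This holds because $r\leq 0.6<1$ and $-k+1/2<0$, so $r^{-k+1/2}>1$; in fact $r^{-k+1/2}\geq r^{-5/2}\geq 0.6^{-5/2}>3$. Since $M(r)>0$, this yields $|\Theta^*(q,x/q)|>M(r)\geq|G(q,x/q)|$, which is the claim.

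\emph{Where care is needed.} The only real obstacle is verifying the direction of the quasi-periodicity. If it came out the other way, $\Theta^*(q,x/q)=x^{-1}\Theta^*(q,x)$, the factor would be $r^{k-1/2}<1$ and the argument would fail. The index shift above settles it: the factor is $x$, with modulus $|x|>1$. I would double-check this with the product form. There, the extra factor $(1+1/(qx)\cdot q)=(1+1/x)$ gained from the second product against the factor $(1+x)$ lost from the first product gives the ratio $(1+x)/(1+1/x)=x$, which confirms it.
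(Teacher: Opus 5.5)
Your proof is correct and is essentially the paper's argument: it uses the quasi-periodicity $\Theta^*(q,x/q)=x\,\Theta^*(q,x)$, which the paper reads off the triple product as the ratio $(1+x)/(1+1/x)$, together with $|x|>1$ and Lemma~\ref{lmM} for $|G(q,x/q)|\leq M(r)$. Your series index shift is a clean independent derivation of that identity; in your product cross-check the words ``gained'' and ``lost'' are swapped (passing from $x$ to $x/q$, the first product gains $(1+x)$ and the second loses $(1+1/x)$), although the ratio you state is right.
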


\begin{proof}
  One checks directly that
  $$\Theta^*(q,x/q)=
  \Theta^*(q,x)\cdot (1+x)/(1+1/x)=\Theta^*(q,x)\cdot x~,$$
  so
  $|\Theta^*(q,x/q)|=|\Theta^*(q,x)|\cdot |x|>|\Theta^*(q,x)|>M(r)\geq
  |G(q,x/q)|$, see Lemma~\ref{lmM}.

\end{proof}

The lemma implies that if $0<|q|\leq 0.6$ and $|\theta (q,x)|>0$, then $|\theta (q,x/q)|>0$.
For fixed $q$, if $x$ runs over the circle of radius $|q|^{-k+1/2}$, then
$x/q$ runs over the circle of radius $|q|^{-k-1/2}$. For $k\geq 3$,
one can apply successively Lemma~\ref{lmTheta*} to conclude that
$\theta \neq 0$ for $|x|=|q|^{-k+1/2}$ and $q\in \mathbb{S}_1$. As for $|q|<c_0$,
one has (\ref{equsepar}) and as the zeros of $\theta$ depend continuously
on $q$, conditions (\ref{equsepar}) with $k\geq 3$ hold true
for $q\in \mathbb{S}_1$; for $k=1$ and $2$, this was proved in
$3^{\circ}-7^{\circ}$ and $8^{\circ}$ respectively.
\vspace{1mm}

$10^{\circ}$. It is shown in \cite{KoVJM} that for 
$q\in \mathbb{S}_{0.55,\pi /2,3\pi /2}$, conditions (\ref{equsepar}) hold true;
and that for $q\in \mathbb{S}_{0.6,\pi /2,3\pi /2}$, conditions
(\ref{equsepar}) hold true for $k\neq 2$,~$3$, there is a simple zero of
$\theta$ in the annulus $\mathbb{A}_{5/2,7/2}(|q|)$ and there are exactly two
simple zeros in $\mathbb{A}_{3/2,7/2}(|q|)$. We show below that for
$q\in \mathbb{S}_{0.6,\pi /2,\pi }$ (hence for $q\in \mathbb{S}_{0.6,\pi /2,3\pi /2}$),
condition (\ref{equrho}) takes place.

Using the continuous dependence of the zeros $\xi_j$ on $q$, this is
sufficient to conclude that for $q\in \mathbb{S}_{0.6,\pi /2,3\pi /2}$,
the zeros of $\theta$ are strongly separated in modulus. Indeed, conditions
(\ref{equsepar}) hold true for $q\in \overline{\mathbb{D}_{c_0}}$, see \cite[Lemma~1]{KoAM}.
Conditions (\ref{equrho}) and conditions (\ref{equsepar}) with $k=1$ mean that
for $q\in \mathbb{S}_{0.6,\pi /2,3\pi /2}$, there is no zero of $\theta$ with
$|x|=|q|^{-3/2}$ and $|x|=|q|^{-5/2}$. The zero $\xi_2$ depending continuously on
$q$, condition (\ref{equsepar}) holds true for $k=1$ and $2$ and
$q\in \mathbb{S}_{0.6,\pi /2,3\pi /2}$.
Hence for $q\in \mathbb{S}_{0.6,\pi /2,3\pi /2}$, $\xi_2\in \mathbb{A}_{3/2,5/2}(|q|)$
and $\xi_2$ is the only zero of $\theta$ in $\mathbb{A}_{3/2,5/2}(|q|)$. 

The zero $\xi_3$ is the only zero of $\theta$ in $\mathbb{A}_{5/2,7/2}(|q|)$ for
$|q|\leq c_0$, see \cite[Lemma~1]{KoAM}. For
$q\in \mathbb{S}_{0.6,\pi /2,3\pi /2}$, $\xi_2$ and $\xi_3$ are
the only zeros of $\theta$ in 
$\mathbb{A}_{3/2,7/2}(|q|)$, they are simple (see \cite{KoVJM}) and $\xi_2$
is the only zero in 
$\mathbb{A}_{3/2,5/2}(|q|)$, so $\xi_3$ is the only zero in 
$\mathbb{A}_{5/2,7/2}(|q|)$. Hence conditions (\ref{equsepar}) hold true for
$q\in \mathbb{S}_{0.6,\pi /2,3\pi /2}$, i.e. the zeros of $\theta$ are strongly
separated in modulus. 

The proof that for $q\in \mathbb{S}_{0.6,\pi /2,\pi }$, condition (\ref{equrho})
takes place, is given numerically, as the previous such proofs, for
$|q|=0.55+0.005j$, $j=0$,$\ldots$, $10$. In all cases one obtains $\rho_{\nu}>0$. The quantity $|\theta_5(q,x)|^2$ (whose minimal values $\mu_{\nu}$ are used in (\ref{equrho})) is a polynomial in $\cos a$, $\sin a$, $\cos b$ and $\sin b$ of total degree $\leq 40$.

\end{document}